\tikzstyle{block} = [draw, draw=black, line width = 1pt, rectangle,
  \newcommand{\Kx}[1]{K^x_{#1}}
  \newcommand{\kx}[1]{k^x_{#1}}
  \newcommand{\Kl}[1]{K^\lambda_{#1}}
  \newcommand{\kl}[1]{k^\lambda_{#1}}
  \newcommand{\Klb}[1]{ K^\lambda_{#1}}
  \newcommand{\klb}[1]{ k^\lambda_{#1}}
  \newcommand{\Hb}[1]{\mathbf{H}_{#1}}
  \newcommand{\fb}[1]{\mathbf{f}_{#1}}
  \newcommand{\cb}[1]{\mathbf{c}_{#1}}
  \newcommand{\Ae}[1]{\mathbf{A}_{#1}}
  \newcommand{\be}[1]{\mathbf{b}_{#1}}
  \newcommand{\Ge}[1]{\mathbf{G}_{#1}}
  \newcommand{\Hbh}[1]{\hat H_{#1}}
  \newcommand{\fbh}[1]{\hat f_{#1}}
  \newcommand{\cbh}[1]{\hat c_{#1}}
  \newcommand{\xh}[1]{\hat x_{#1}}
  \newcommand{\uh}[1]{\hat u_{#1}}
  \newcommand{\Ah}[1]{\hat A_{#1}}
  \newcommand{\Bh}[1]{\hat B_{#1}}
  \newcommand{\ah}[1]{\hat a_{#1}}
  \newcommand{\lamN}[1]{\lambda_{#1}^{\mathcal{N}}}
  \newcommand{\thb}[1]{ \theta_{#1}}
  \newcommand{\Nb}[1]{ Z_{#1}}
  \newcommand{\lamS}[2]{\lambda_{{#1},{#2}}}
  \newcommand{\lamH}[1]{\hat \lambda_{#1}}
  \newcommand{\lamB}[2]{\gamma_{{#1},{#2}}}
  \newcommand{\Ac}[1]{\hat{A}_{#1}}
  \newcommand{\Tc}[1]{\hat{B}_{#1}}
  \newcommand{\ac}[1]{\hat{a}_{#1}}
  \newcommand{\MPCsub}[2]{\mathcal{P}_{#1}^{#2}(N_{#1}^{#2})}
  \newcommand{\MPC}[1]{\mathcal{P}(#1)}
  \newcommand{\lamtc}[1]{\lambda_{tc,#1}}
  \newcommand{\xbar}[1]{\hat x_{#1}}
  \newcommand{\dbar}[1]{\hat u_{#1}}
  \newcommand{\Vhat}[1]{\hat V_{#1}}
\begin{document}

\begin{center}
\huge {An $\Ordo{\log N}$ Parallel Algorithm for \\Newton Step Computation in\\Model Predictive Control} \\ \vspace{2mm}
\large Isak Nielsen, Daniel Axehill \\ \normalsize (Division of Automatic Control, Link\"oping University, Sweden (e-mail: \{isak.nielsen@liu.se, daniel@isy.liu.se\})
\end{center}

\textbf{Abstract}
The use of Model Predictive Control in industry is steadily increasing as more complicated problems can be addressed. Due to that online optimization is usually performed, the main bottleneck with Model Predictive Control is the relatively high computational complexity. Hence, a lot of research has been performed to find efficient algorithms that solve the optimization problem. As parallelism is becoming more commonly used in hardware, the demand for efficient parallel solvers for Model Predictive Control has increased. In this paper, a tailored parallel algorithm that can adopt different levels of parallelism for solving the Newton step is presented. With sufficiently many processing units, it is capable of reducing the computational growth to logarithmic growth in the prediction horizon. Since the Newton step computation is where most computational effort is spent in both interior-point and active-set solvers, this new algorithm can significantly reduce the computational complexity of highly relevant solvers for Model Predictive Control. 

\vspace{4mm}

\textbf{Keywords}
Model Predictive Control, Parallel Computation, Optimization  


\section{Introduction}
\label{sec:intro}
Model Predictive Control (MPC) is one of the most commonly used
control strategies in industry. Some important reasons for its success
include that it can handle multi-variable systems and constraints on
control signals and state variables in a structured way~\cite{maciejowski2002predictive}. In each
sample an optimization problem is solved and in the methods
considered in this paper, the optimization problem is assumed to be
solved on-line. Note that, however, similar linear algebra is also useful
off-line in explicit MPC solvers. Depending on which type of system
and problem formulation that is used the optimization
problem can be of different types, and the most common variants are
linear MPC, nonlinear MPC and hybrid MPC. In most cases, the effort
spent in the optimization problems boils down to solving
Newton-system-like equations. Hence, lots of research has been done in
the area of solving this type of system of equations efficiently when
it has the special form from MPC, see \eg\cite{jonson83:thesis,rao98:_applic_inter_point_method_model_predic_contr,hansson00:_primal_dual_inter_point_method,bartlett02:_quadr,vandenberghe02:_robus_full,akerblad04:_effic,axehill06:_mixed_integ_dual_quadr_progr_tail_mpc,axevanhan07:_relax_applic_mipc_compa_and_eff_compu,axehill08:thesis,axehill08:_dual_gradien_projec_quadr_progr,diehl09:_nonlin_model_predic_contr,nielsen13low_rank_updates}. 

In recent years, much effort has been spent on efficient parallel solutions~\cite{constantinides2009tutorial}. In \cite{soudbakhsh2013parallelized} an extended Parallel Cyclic Reduction algorithm is used to reduce the computation to smaller systems of equations that are solved in parallel. The computational complexity of this algorithm is reported to be $\Ordo{\log N}$, where $N$ is the prediction horizon. \cite{laird2011parallel}, \cite{ZhuParallelNP} and \cite{reuterswardLic} adopt a time-splitting approach to split the prediction horizon into blocks. The subproblems in the blocks are connected through common variables and are solved in parallel using Schur complements. The common variables are decided via a consensus step where a dense system of equations involving all common variables has to be solved sequentially.
In~\cite{o2012splitting} a splitting method based on Alternating Direction Method of Multipliers (ADMM) is used, where some steps of the algorithm can be computed in parallel. \cite{stathopoulos2013hierarchical} develop an iterative three-set splitting QP solver. In this method the prediction horizon is split into smaller subproblems that are in turn split into three simpler problems. All these can be computed in parallel and a consensus step using ADMM is performed to achieve the final solution. 

In this paper there are two main contributions. First, it is shown that an equality constrained MPC problem of prediction horizon $N$ can be reduced to a new, smaller MPC problem on the same form but with prediction horizon $\tilde N < N$ in parallel. Since the new problem also has the structure of an MPC problem, it can be solved in $\Ordo{N}$.
Second, by repeating the reduction procedure it can be shown that an equality constrained MPC problem corresponding to the Newton step can be solved non-iteratively in parallel, giving a computational complexity growth as low as $\Ordo{\log N}$. The major computational effort when solving an MPC problem is often spent on computing the Newton step, and doing this in parallel as proposed in this paper significantly reduces the overall computational effort of the solver.

In this article, $\posdefmats^n$ ($\possemidefmats^n$) denotes
symmetric positive (semi) definite matrices with $n$ columns. Furthermore, let $\intnums$
be the set of integers, and $\intset{i}{j} = \braces{i,i+1,\hdots,j}$. Symbols in sans-serif font (\eg $\timestack{x}$) denote vectors of stacked element.

\begin{definition}
\label{def:LICQ}
For a set of linear constraints $A x = b$, the linear independence constraint qualification (LICQ) holds if the constraint gradients are linearly independent, \ie if $A$ has full row rank. When LICQ is violated it is referred to as primal degeneracy. 
\end{definition}


\section{Problem Formulation}
\label{sec:prob_form}
The optimization problem that is solved at each sample in linear MPC is a convex QP problem in the form

\begin{equation}
  \label{eq:min_problem}
 \minimize{
    &\sum^{N-1}_{t=0}\big(\frac{1}{2}\begin{bmatrix}
    x_t^T & u_t^T
    \end{bmatrix} H_t\begin{bmatrix}
    x_t \\  u_t
\end{bmatrix} +  f_t^T \begin{bmatrix}
x_t \\  u_t
\end{bmatrix}  +  c_t   \big) \\
     &+\frac{1}{2}x^T_NH_{N}x_N + f_N^T x_N + c_N}
  {\timestack{x},\timestack{u}}
  {&x_0 = \bar x \\
    &x_{t+1} = A_tx_t + B_t u_t +  a_t, \; t \in \intset{0}{N-1} \\
    & u_t \in \mathcal{U}_t, \; t \in \intset{0}{N-1} \\
    &x_t \in \mathcal{X}_t, \; t \in \intset{0}{N}}
\end{equation}
where the equality constraints are the dynamics equations of the system, and $\mathcal{U}_t$ and $\mathcal{X}_t$ are the sets of feasible control signals and states, respectively. In this paper, let the following assumptions hold for all $t$
\begin{assumption}
 $\mathcal{X}_t = \mathbb{R}^{n_x}$ and $\mathcal{U}_t$ consists of constraints of the form $ u_{t,\min} \leq  u_t \leq  u_{t,\max}$, \ie upper and lower bounds on the control signal.
\end{assumption}
\begin{assumption}
\begin{equation}
H_t = \begin{bmatrix}
H_{x,t} & H_{xu,t} \\ H_{xu,t}^T & H_{u,t}
\end{bmatrix} \in \possemidefmats^{n_x+n_u}, \; H_{u,t} \in \posdefmats^{n_u}, \; H_N \in \possemidefmats^{n_x}
\end{equation}
\end{assumption}
\begin{assumption}
The dynamical system in~\eqref{eq:min_problem} is stable.
\end{assumption}

The problem~\eqref{eq:min_problem} can be solved using different methods, see \eg~\cite{nocedal06:num_opt}. Two common methods are interior-point (IP) methods and active-set (AS) methods. IP methods approximate the inequality constraints with barrier functions, whereas the AS methods iteratively changes the set of inequality constraints that hold with equality until the optimal active set has been found. In both types, the main computational effort is spent while solving Newton-system-like equations often corresponding to an equality constrained MPC problem with prediction horizon $N$ (or to a problem with similar structure)
\begin{equation}
 \label{eq:org_eqc_problem}
 \MPC{N}: \minimize{
    &\sum^{N-1}_{t=0}\big(\frac{1}{2}\begin{bmatrix}
    x_t^T & u_t^T
    \end{bmatrix} H_t\begin{bmatrix}
    x_t \\ u_t
\end{bmatrix} +  f_t^T \begin{bmatrix}
x_t \\ u_t
\end{bmatrix}  + c_t   \big) \\
     &+\frac{1}{2}x^T_NH_{N}x_N + f_N^T x_N + c_N}
  {\timestack{x},\timestack{u}}
  {&x_0 = \bar x \\
    &x_{t+1} = A_tx_t + B_tu_t + a_t, \; t \in \intset{0}{N-1}.}
\end{equation}
Even though this problem might look simple and irrelevant it is the workhorse of many optimization routines for linear, nonlinear and hybrid MPC.
$\MPC{N}$ is the resulting problem after the equality constraints corresponding to active control signal constraints have been eliminated as in an AS method (only control signal constraints are considered). Note that $u_t$ and the corresponding matrices have potentially changed dimensions from~\eqref{eq:min_problem}. Further, let the following assumption hold
\begin{assumption}
LICQ holds for~\eqref{eq:org_eqc_problem}. \label{assum:lin_indep}
\end{assumption}

\section{Problem decomposition}
\label{sec:time_split}
The equality constrained MPC problem~\eqref{eq:org_eqc_problem} is highly structured and this could be used to split the MPC problem into smaller subproblems that only share a small number of common variables. Given the value of the common variables, the subproblems can be solved individually. These smaller subproblems are obtained by splitting the prediction horizon in $p+1$ intervals $i=0,\ldots,p$ (each of length $N_i$) and introducing initial and terminal constraints $x_{0,i} = \xh{i}$ and $x_{N_i,i} = d_i$ for each subproblem. The connection between the subproblems $i=0,\ldots,p$ are given by the coupling constraints $\xbar{i+1} = d_i $. Let $x_{t,i}$ and $u_{t,i}$ denote the state and control signal in subproblem $i$ and let the indices of the matrices be defined analogously. For notational aspects, and without loss of generality, the terminal state $d_i$ is generalized to $d_i = \Ac{i} \xbar{i} + \Tc{i} \dbar{i} + \ac{i}$, where $\xbar{i}$ and $\dbar{i}$ are the common variables. The choice of this notation will soon become clear. Then, the MPC problem~\eqref{eq:org_eqc_problem} can be cast in the equivalent form
\begin{equation}
\label{eq:org_eqc_problem_expanded}
\minimize{\sum_{i=0}^p &\sum_{t=0}^{N_{i}-1} \big(\frac{1}{2}\begin{bmatrix}
    x_{t,i}^T & u_{t,i}^T
    \end{bmatrix}H_{t,i}\begin{bmatrix}
    x_{t,i} \\ u_{t,i}
\end{bmatrix} + f_{t,i}^T \begin{bmatrix}
x_{t,i} \\ u_{t,i}
\end{bmatrix} + c_{t,i}   \big)\\&+\frac{1}{2}x_{N_{p},p}^TH_{N_p,p}x_{N_{p},p}+f_{N_{p},p}^T x_{N_{p},p} + c_{N_{p},p}}{\timestack{x},\timestack{u}}{&\xbar{0} = \bar x \\ &\forall \; i \in \intset{0}{p} \begin{cases}
x_{0,i} = \xbar{i} \\
x_{t+1,i} = A_{t,i}x_{t,i} + B_{t,i}u_{t,i}+  a_{t,i}, \\ \hspace{12em} t \in \intset{0}{N_{i}-1} \\
x_{N_{i},i} = d_{i}= \Ac{i} \xbar{i} + \Tc{i} \dbar{i} +  \ac{i}, \; i \neq p \end{cases}\\
&\xbar{i+1} = d_i = \Ac{i} \xbar{i} + \Tc{i} \dbar{i} + \ac{i}, \; i \in \intset{0}{p-1},}
\end{equation}
Note that the first initial state $\xbar{0}$ is equal to the initial state of the original problem~\eqref{eq:org_eqc_problem}. For $i = 0,\ldots,p-1$ the individual subproblems in~\eqref{eq:org_eqc_problem_expanded} are given by
\begin{equation}
\minimize{&\sum_{t=0}^{N_{i}-1} \big(\frac{1}{2}\begin{bmatrix}
    x_{t,i}^T & u_{t,i}^T
    \end{bmatrix}H_{t,i}\begin{bmatrix}
    x_{t,i} \\ u_{t,i}
\end{bmatrix} +f_{t,i}^T \begin{bmatrix}
x_{t,i} \\ u_{t,i}
\end{bmatrix}  + c_{t,i}   \big)}{\timestack{x},\timestack{u}}{x_{0,i} &= \xbar{i} \\
x_{t+1,i} &= A_{t,i}x_{t,i} + B_{t,i}u_{t,i}+a_{t,i}, \; t \in \intset{0}{N_{i}-1} \\
x_{N_{i},i} &= \Ac{i} \xbar{i} + \Tc{i} \dbar{i} + \ac{i}}
\label{eq:subproblem}
\end{equation}
Here $i$ is the index of the subproblem. The last problem $p$ does not have a terminal constraint and is hence only dependent on one common variable,
\begin{equation}
\minimize{&\sum_{t=0}^{N_{p}-1} \big(\frac{1}{2}\begin{bmatrix}
    x_{t,p}^T & u_{t,p}^T
    \end{bmatrix}H_{t,p}\begin{bmatrix}
    x_{t,p} \\ u_{t,p}
\end{bmatrix} +f_{t,p}^T \begin{bmatrix}
x_{t,p} \\ u_{t,p}
\end{bmatrix}  + c_{t,p}   \big) \\ &+\frac{1}{2}x_{N_{p},p}^TH_{N_p,p}x_{N_{p},p}+f_{N_{p},p}^T x_{N_{p},p} + c_{N_{p},p}}{\timestack{x},\timestack{u}}{x_{0,p} &= \xbar{p} \\
x_{t+1,p} &= A_{t,p}x_{t,p} + B_{t,p}u_{t,p}+a_{t,p}, \; t \in \intset{0}{N_{p}-1}.}
\label{eq:subproblem_last}
\end{equation} 
\begin{remark}
The sizes of the subproblems, \ie the values of $N_i$, do not necessarily have to be the same, allowing different sizes of the subproblems.
\end{remark}

Temporarily excluding details, each subproblem~\eqref{eq:subproblem} and~\eqref{eq:subproblem_last} can be solved parametrically and the solution to each subproblem is a function of the common variables $\xbar{i}$ and $\dbar{i}$. By inserting these parametric solutions of all subproblems in~\eqref{eq:org_eqc_problem_expanded} and using the coupling constraints between the subproblems, problem~\eqref{eq:org_eqc_problem_expanded} can be reduced to an equivalent master problem
\begin{equation}
\label{eq:red_mpc}
\MPC{p}: \minimize{&\sum_{i=0}^{p-1}\big(\frac{1}{2}\begin{bmatrix}
\xh{i}^T & \uh{i}^T
\end{bmatrix} \Hbh{i} \begin{bmatrix}
\xh{i} \\ \uh{i}
\end{bmatrix} + \fbh{i}^T\begin{bmatrix}
\xh{i} \\ \uh{i}
\end{bmatrix} + \cbh{i} \big) \\
&+\frac{1}{2}\xh{p}^T \Hbh{p}\xh{p}+\fbh{p}^T\xh{p}+\cbh{p} }
{\timestack{\xh},\timestack{\uh}}
{&\xh{0}= \bar x \\ &\xh{i+1}=\Ah{i}\xh{i}+\Bh{i}\uh{i}+\ah{i}, \; i\in \intset{0}{p-1}.}
\end{equation}
Here $\Hbh{i}$, $\fbh{i}$ and $\cbh{i}$ are computed in each subproblem and represents the value function. The dynamics constraints in the master problem are given by the coupling constraints between the subproblems. This new MPC problem is on the same form as the original equality constrained problem~\eqref{eq:org_eqc_problem}, but with prediction horizon $p < N$. The reduction of the problem is summarized in Theorem~\ref{thm:reduce_mpc} and is graphically depicted in Fig.~\ref{fig:mpc_red_struct}, where the dotted lines represents repetition of the structure.  This approach is similar to primal decomposition~\cite{Lasdon1970optimization},~\cite{primaldecomp} where the $p+1$ subproblems  share common variables $\xbar{i}$ and $\dbar{i}$ that are computed iteratively. In the work presented in this paper the common variables are however not computed iteratively but instead determined by solving the new, reduced MPC problem at the upper level in Fig.~\ref{fig:mpc_red_struct}. Inserting the optimal $\xbar{i}$ and $\dbar{i}$ into the subproblems given by~\eqref{eq:subproblem} and~\eqref{eq:subproblem_last} gives the solution to~\eqref{eq:org_eqc_problem}. 
\begin{figure}
\centering
\def\svgwidth{0.85\columnwidth}
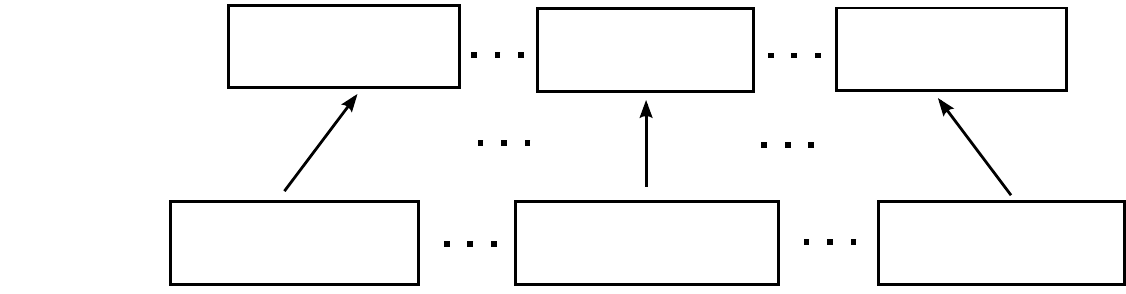
\caption{The parameters $\xh{i}$ and $\uh{i}$ in each subproblem $0,\ldots,i,\ldots,p$ can be interpreted as new state and control variables in the reduced MPC problem with prediction horizon $p$. The value functions $\Vhat{i}(\xh{i},\uh{i})$ are the terms in the new objective function.}
\label{fig:mpc_red_struct}
\end{figure}
\begin{theorem}
Consider an optimization problem $\MPC{N}$ defined in~\eqref{eq:org_eqc_problem} where Assumption~\ref{assum:lin_indep} holds.
Then $\MPC{N}$ can be reduced to $\MPC{p}$ in parallel, where $1 \leq p < N$. The optimal solution $X^*$ and $\lambda^*$ to $\MPC{N}$ can be computed in parallel from the solution $\hat X^*$ and $\hat \lambda^*$ to $\MPC{p}$. \label{thm:reduce_mpc}
\end{theorem}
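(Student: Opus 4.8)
The plan is to establish the two claims in sequence: first that the parametric solution of the subproblems turns~\eqref{eq:org_eqc_problem_expanded} into the smaller problem $\MPC{p}$, and second that this transformation is invertible so that the full solution of $\MPC{N}$ is recoverable. The starting point is the observation that~\eqref{eq:org_eqc_problem_expanded} is, by construction, an exact reformulation of~\eqref{eq:org_eqc_problem}: introducing the intermediate variables $\xbar{i}$ and $\dbar{i}$ together with the coupling constraints $\xbar{i+1} = \Ac{i}\xbar{i} + \Tc{i}\dbar{i} + \ac{i}$ merely renames the shared states at the split points and forces adjacent subproblems to agree there, so neither the feasible set nor the objective value is altered.

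The core step is to solve each subproblem~\eqref{eq:subproblem}, and the terminal one~\eqref{eq:subproblem_last}, parametrically in its common variables $(\xbar{i},\dbar{i})$. For fixed parameters, each subproblem is an equality-constrained QP whose decision variables are the interior states and all control signals; the initial constraint $x_{0,i}=\xbar{i}$ and the terminal constraint $x_{N_i,i}=\Ac{i}\xbar{i}+\Tc{i}\dbar{i}+\ac{i}$ enter affinely through the parameters, as do the dynamics. Partial minimization of a quadratic objective over the free variables subject to affine constraints yields an optimal value function that is again quadratic in the parameters, which I would write as $\Vhat{i}(\xbar{i},\dbar{i}) = \frac{1}{2}[\,\xbar{i}^T\ \dbar{i}^T\,]\Hbh{i}[\,\xbar{i}^T\ \dbar{i}^T\,]^T + \fbh{i}^T[\,\xbar{i}^T\ \dbar{i}^T\,]^T + \cbh{i}$, reading off $\Hbh{i},\fbh{i},\cbh{i}$ from the KKT system, for instance via a Riccati-type recursion. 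The last subproblem carries no terminal constraint and therefore depends on $\xbar{p}$ alone, giving the purely state-dependent $\Vhat{p}$. Since the subproblems share no variables once $(\xbar{i},\dbar{i})$ are treated as parameters, these $p+1$ parametric solves are independent and can be performed simultaneously on separate processing units.

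Substituting the value functions into~\eqref{eq:org_eqc_problem_expanded} and keeping the coupling constraints as dynamics then reproduces~\eqref{eq:red_mpc} exactly, under the identification $\xh{i}=\xbar{i}$, $\uh{i}=\dbar{i}$, $\Ah{i}=\Ac{i}$, $\Bh{i}=\Tc{i}$, $\ah{i}=\ac{i}$, which the notation of~\eqref{eq:org_eqc_problem_expanded} was chosen to make transparent; this is an MPC problem $\MPC{p}$ of the same form but horizon $p<N$. For the recovery claim, solving $\MPC{p}$ produces optimal common variables $\xh{i}^*,\uh{i}^*$; feeding each pair back into the corresponding subproblem's parametric solution reconstructs the interior primal variables $x_{t,i}^*,u_{t,i}^*$ and the interior multipliers, again independently and hence in parallel, while the multipliers $\hat\lambda^*$ of the master dynamics supply the multipliers of the coupling constraints. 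Assembling these pieces yields $X^*$ and $\lambda^*$ for $\MPC{N}$.

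The main obstacle is well-posedness of the parametric subproblem: the value function $\Vhat{i}$ must be finite and genuinely quadratic for all $(\xbar{i},\dbar{i})$, which requires the terminal state to be reachable from $\xbar{i}$ and the reduced Hessian, after eliminating the equality constraints, to be positive definite. This is precisely where the freedom in the parametrization $d_i=\Ac{i}\xbar{i}+\Tc{i}\dbar{i}+\ac{i}$ must be exploited: $\Tc{i}$ should be chosen so that $\dbar{i}$ ranges over exactly the reachable terminal displacements, keeping each subproblem feasible and its constraints linearly independent. I would then verify, using Assumption~\ref{assum:lin_indep}, that LICQ for~\eqref{eq:org_eqc_problem} descends both to each subproblem and to the master problem, which guarantees uniqueness of the multipliers and hence that the dual recovery is consistent; this bookkeeping between the multipliers of~\eqref{eq:org_eqc_problem_expanded} and those of $\MPC{p}$ is the step that most needs care.
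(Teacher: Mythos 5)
Your primal-side argument matches the paper's: exact reformulation via the split variables, independent parametric solves of the subproblems, quadratic value functions assembled into $\MPC{p}$, and recovery of $X^*$ by substituting the optimal $(\xh{i}^*,\uh{i}^*)$ back into the affine parametric solutions. The gap is in your final step. You propose to ``verify, using Assumption~\ref{assum:lin_indep}, that LICQ for~\eqref{eq:org_eqc_problem} descends both to each subproblem and to the master problem,'' but this is false in general, and the failure is not a corner case -- it is the generic situation the paper's machinery is built for. The terminal constraint $x_{N_i,i}=d_i$ adds $n_x$ constraint gradients to subproblem $i$, and these are linearly independent of the dynamics constraints only if the block controllability matrix $\mathcal{S}_i$ has full row rank $n_x$. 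For short subhorizons (e.g.\ the $s=2$ blocks with $n_u<n_x$ used in the paper's experiments) this rank condition cannot hold, so the subproblems are primal degenerate even though $\MPC{N}$ satisfies LICQ. Your choice of $\Tc{i}$ as a basis for $\range{\mathcal{S}_i}$ (which matches the paper's~\eqref{eq:d_range}) fixes \emph{feasibility} of the subproblem for every parameter value, but it does not restore LICQ: the redundancy lies among the rows of $\Ae{i}$ themselves, independently of how $d_i$ is parametrized, so $\ker{\Ae{i}^T}$ is nontrivial whenever $\ker{\mathcal{S}_i^T}$ is, and the subproblem duals are non-unique.

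Consequently your dual-recovery step has no mechanism for selecting, among the continuum of optimal subproblem multipliers, the one that stitches together into $\lambda^*$ for $\MPC{N}$; a generic parametric solver (e.g.\ returning the minimum-norm dual $\gamma_i$) will \emph{not} produce multipliers satisfying the original KKT equations across the splice points. The paper closes exactly this hole with three ingredients you are missing: Theorem~\ref{thm:nullspace}, which characterizes $\ker{\Ae{i}^T}$ explicitly as $\{\Nb{i}w_i : w_i\in\ker{\mathcal{S}_i^T}\}$; Theorem~\ref{thm:dual_vars_overdet}, which derives the inter-subproblem dual relations~\eqref{eq:lem:lamSp_eq_lamHp}--\eqref{eq:lem:lamN_eq_lamd} that hold \emph{without} assuming subproblem LICQ; and the specific null-space correction $w_i=-(\lamB{tc}{i}+\lamH{i})$ yielding the selected dual~\eqref{eq:mp_dual_sol_corrected}, which forces $\lamtc{i}=-\lamH{i}$ and hence $\lamS{N_i}{i}=\lamH{i}=\lamS{0}{i+1}$. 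The paper then argues in the reverse direction from yours: rather than pushing uniqueness down to the subproblems, it shows the assembled primal--dual candidate satisfies the KKT system of~\eqref{eq:org_eqc_problem} and invokes Assumption~\ref{assum:lin_indep} at the level of the \emph{original} problem to conclude the candidate equals the unique solution. Your proposal would be a valid proof only under the additional hypothesis that every $\mathcal{S}_i$ has full row rank (the setting of Corollary~\ref{cor:dual_vars_underdet}), which the theorem does not assume.
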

\begin{proof}
For the proof of Theorem~\ref{thm:reduce_mpc}, see Appendix~\ref{app:subsec:proof_lemma_reduction}. 
\end{proof}
In the rest of this section it will be shown how the subproblems~\eqref{eq:subproblem} and~\eqref{eq:subproblem_last} are solved parametrically and how the matrices needed in~\eqref{eq:red_mpc} are computed.


\subsection{Solution of the first subproblems $i=0,...,p-1$}
\label{subsec:mp_first_blocks}
In this section, it will be shown that each subproblem $i=0,\ldots,p-1$ given by~\eqref{eq:subproblem} can be solved parametrically and that the solution can be expressed as a function of the common variables $\xh{i}$ and $\uh{i}$.
For now it is assumed that LICQ holds for~\eqref{eq:subproblem}. The optimization problem can be cast in the more compact form
\begin{equation}
\minimize{\frac{1}{2} X_i^T \Hb{i} X_i + \fb{i}^TX_i+\cb{i}}{X_i}{\Ae{i}X_i = \be{i} + \Ge{i} \theta_i,}
\label{eq:mp_subproblem} 
\end{equation}
by defining
\begin{equation}
X_i \triangleq \begin{bmatrix}
x_{0,i} \\ u_{0,i} \\ \vdots \\ u_{N_{i}-1,i} \\ x_{N_{i},i} 
\end{bmatrix} \label{eq:def_Xi}, \quad \lambda_i \triangleq \begin{bmatrix}
\lamS{0}{i} \\ \lamS{1}{i} \\ \vdots \\ \lamS{N_i}{i} \\ \lamtc{i}
\end{bmatrix}, \quad \cb{i} \triangleq \sum_{t = 0}^{N_{i}-1} c_{t,i} ,
\end{equation} 
\begin{equation}
\Hb{i}  \triangleq \begin{bmatrix}
H_{0,i} & 0 & \cdots &  0\\
0 & \ddots & \ddots &  \vdots \\
\vdots & \ddots & H_{{N_i-1},i} & 0\\
0 & \cdots & 0 & 0
\end{bmatrix}, \quad
\fb{i} \triangleq \begin{bmatrix}
f_{0,i} \\ \vdots \\ f_{N_{i}-1,i} \\ 0
\end{bmatrix}, 
\end{equation}
\begin{equation}
\Ae{i} \triangleq \begin{bmatrix}
-I & 0 & \cdots & & & \cdots & 0\\
A_{0,i} & B_{0,i} & -I & 0 & \cdots & & \vdots\\
0 & 0 &A_{1,i} & \cdots \\
\vdots & \vdots & & \ddots & & & 0 \\
 \vdots & \vdots & &              & A_{N_i-1,i} & B_{N_{i}-1,i} & -I \\
0 &   \cdots  & &   &    \cdots &   0          & -I \\
\end{bmatrix},
\end{equation}
\begin{equation}
\be{i} \triangleq \begin{bmatrix}
0 \\ -a_{0,i} \\ \vdots \\ -a_{N_i-1,i} \\ - \ac{i}
\end{bmatrix}, \quad
\Ge{i} \triangleq \begin{bmatrix}
-I & 0 \\ 0 & 0 \\ \vdots &  \vdots \\ 0 & 0 \\ -\Ac{i} & -\Tc{i}
\end{bmatrix}, \quad
\theta_i \triangleq \begin{bmatrix}
\xbar{i} \\ \dbar{i}
\end{bmatrix}, 
\label{eq:def_Gi}
\end{equation}

The dual variables $\lambda_i$ in the subproblem are introduced as 
\begin{align}
\lamS{0}{i} \leftrightarrow x_{0,i} &= \xbar{i} \label{eq:lam0_dual_vars_ctrl}\\
\lamS{t+1}{i} \leftrightarrow x_{t+1,i}  &= A_{t,i}x_{t,i} + B_{t,i} u_{t,i} + a_{t,i} , \; t \in \intset{0}{N_i-1} \label{eq:lamt_dual_vars_ctrl} \\
\lamtc{i} \leftrightarrow x_{N_i,i}  &=  \Ac{i} \xbar{i} + \Tc{i} \dbar{i} +  \ac{i}. \label{eq:lamd_dual_vars_ctrl}
\end{align}
The symbol $\leftrightarrow$ should be interpreted as $\lambda$ being the dual variable corresponding to the respective equality constraint. 

Note that~\eqref{eq:mp_subproblem} is a very simple multiparametric quadratic programming problem with parameters $\theta_i$ and only equality constraints. Hence the optimal primal and dual solution to this problem are both affine functions of the parameters~$\theta_i$,~\cite{TondellMPQP}.
\begin{remark}
Since the simple parametric programming problem~\eqref{eq:mp_subproblem} is subject to equality constraints only it is not \emph{piecewise} affine in the parameters. Hence, the solution can be computed cheaply and it does not suffer from the complexity issues of a general multiparametric programming problem.
\end{remark}
Since LICQ is assumed to hold, the unique optimal primal solution can be expressed as
\begin{equation}
X_i^*(\theta_i) = \Kx{i} \theta_i + \kx{i},
\label{eq:mp_primal_sol}
\end{equation}
and similarly for the unique optimal dual solution
\begin{equation}
\lambda_i^*(\theta_i) = \Kl{i} \theta_i + \kl{i},
\label{eq:mp_dual_sol}
\end{equation}
for some $\Kx{i}$, $\kx{i}$, $\Kl{i}$ and $\kl{i}$, and where $i$ denotes the index of the subproblem. 
The value function of~\eqref{eq:subproblem} is obtained by inserting the parametric primal optimal solution~\eqref{eq:mp_primal_sol} into the objective function in~\eqref{eq:mp_subproblem}, with the result
\begin{equation}
\begin{split}
&\Vhat{i}(\theta_i) = \frac{1}{2}(\theta_i ^T (\Kx{i})^T + (\kx{i})^T)\Hb{i}(\Kx{i} \theta_i + \kx{i}) +  \\
 & \fb{i}^T(\Kx{i} \theta_i + \kx{i}) + \cb{i}
= \frac{1}{2}\theta_i^T \Hbh{i} \theta_i + \fbh{i}^T\theta_i + \cbh{i}
\end{split}
\label{eq:mp_subproblem_sol_objF}
\end{equation}
where $\Hbh{i} = (\Kx{i})^T\Hb{i}\Kx{i}$, $\fbh{i}^T = \fb{i}^T\Kx{i}+(\kx{i})^T\Hb{i}\Kx{i}$ and $\cbh{i} = \cb{i}+\frac{1}{2}(\kx{i})^T\Hb{i}\kx{i}+\fb{i}^T\kx{i}$.


\subsection{Solution of the last subproblem $i=p$}
\label{subsec:mp_last_block}
The last subproblem~\eqref{eq:subproblem_last} is different from the $p$ first since there is no terminal constraint on $x_{N_{p},p}$. Hence the parametric solution of this problem only depends on the initial state $\xbar{p}$ of the subproblem. The derivation of the solution is analogous to the one in Section~\ref{subsec:mp_first_blocks}, but with $\theta_p = \xbar{p}$. The unique optimal primal solution to 
\begin{equation}
\minimize{\frac{1}{2} X_{p}^T \Hb{p} X_{p} + \fb{p}^TX_{p}+\cb{p}}{X_{p}}{\Ae{p}X_{p} = \be{p} + \Ge{p} \theta_{p},}
\label{eq:mp_subproblem_last} 
\end{equation}
is given as the affine function
\begin{equation}
X_{p}^* = \Kx{p}\theta_{p}+\kx{p} = \Kx{p}\xbar{p} + \kx{p},
\label{eq:mp_last_primal_sol}
\end{equation}
and the unique optimal dual solution is 
\begin{equation}
\lambda_{p}^* = \Kl{p}\theta_{p}+\kl{p} = \Kl{p}\xbar{p} + \kl{p}.
\label{eq:mp_last_dual_sol}
\end{equation}
The dual variables $\lambda_p$ are defined as in~\eqref{eq:def_Xi}, but the last dual variable $\lamtc{i}$ corresponding to the terminal constraint does not exist. The same notation as in Section~\ref{subsec:mp_first_blocks} has been used, with the slight difference that the last blocks in $\Hb{p}$ and $\fb{p}$ are $H_{N_p,p}$ and $f_{N_p,p}$ respectively. Furthermore, the sum when computing $\cb{p}$ is also including $t=N_p$, the last block rows in $\Ae{i}$, $\be{i}$ and $\Ge{i}$ are removed and the last column of $\Ge{i}$ is removed (all corresponding to the constraint and parameter that is not present in the last subproblem).


Inserting the solution~\eqref{eq:mp_last_primal_sol} into the objective function of subproblem~\eqref{eq:mp_subproblem_last} gives the value function $\Vhat{p}(\theta_{p})$ as
\begin{equation}
\begin{split}
\Vhat{p}(\theta_{p}) &= \frac{1}{2}\theta_{p}^T \Hbh{p} \theta_{p} + \fbh{p}^T\theta_{p} + \cbh{p}, \\
\end{split}
\label{eq:mp_subproblem_last_sol_objF}
\end{equation}
where $\Hbh{p}$, $\fbh{p}$ and $\cbh{p}$ are defined as before.


\subsection{Solution of a primal degenerate subproblem}
\label{subsec:mp_overdetermined}
The terminal constraint in a subproblem given by~\eqref{eq:mp_subproblem} introduces $n_x$ new constraints, which might result in an infeasible subproblem or that LICQ is violated for the subproblem even though this is not the case in the original problem~\eqref{eq:org_eqc_problem}. According to Definition~\ref{def:LICQ}, violation of LICQ is known as primal degeneracy and the dual variables for a primal degenerate problem are non-unique,~\cite{TondellMPQP}. In this section it will be shown how to choose the parameter in the terminal constraint to achieve a feasible problem and also how to choose the dual variables of subproblem $i$ to coincide with the corresponding dual solution to the original problem~\eqref{eq:org_eqc_problem}.

Since the subproblem is feasible only if there exists a solution to $\Ae{i}X_i = \be{i} + \Ge{i} \theta_i$ it is required that $\be{i}+\Ge{i} \theta_i \in \range{\Ae{i}}$.
This is satisfied if the terminal constraint is chosen carefully, which means that it has to be known which $\theta_i$ that will give a feasible solution. To do this, the dynamics constraints in subproblem $i$ can be used to compute the final state in subproblem $i$ given the control signals $\timestack{u}_i$ and the initial state $\xbar{i}$ as
\begin{equation}
\begin{split}
&x_{N_{i},i} = \underbrace{\prod_{t=0}^{N_{i}-1}A_{t,i}}_{\triangleq \mathcal{A}_i} \xbar{i} + \underbrace{\begin{bmatrix}
\prod_{t=1}^{N_{i}-1} A_{t,i} & \cdots & A_{N_{i}-1,i} & I
\end{bmatrix}}_{\triangleq \mathcal{D}_i}\timestack{a}_i+ \\ &\underbrace{\begin{bmatrix}
\prod_{t=1}^{N_{i}-1}A_{t,i}B_{0,i} & \cdots & A_{N_{i}-1,i}B_{N_{i}-2,i}  & B_{N_{i}-1,i}
\end{bmatrix}}_{\triangleq \mathcal{S}_i}\timestack{u}_i \Rightarrow \\
&x_{N_{i},i} = \mathcal{A}_i\xbar{i} + \mathcal{S}_i\timestack{u}_i + \mathcal{D}_i \timestack{a}_i,
\end{split} \label{eq:definiton_A_S_D}
\end{equation}
where $\mathcal{S}_i$ can be recognized as the controllability matrix,
\begin{equation}
\timestack{a}_i = \begin{bmatrix}
a_{0,i} \\ \vdots \\ a_{N_{i}-1,i}
\end{bmatrix}, \quad \timestack{u}_i = \begin{bmatrix}
u_{0,i} \\ \vdots \\ u_{N_{i}-1,i}
\end{bmatrix},
\end{equation}
and
\begin{equation}
\prod_{t=t_0}^{t_1} A_t = A_{t_1}\cdots A_{t_0}.
\end{equation}
The feasibility of the subproblem can be ensured by a careful selection of the parametrization of the problem. In this work this is performed by requiring that the final state satisfies the terminal constraint $x_{N_{i},i} = d_i =  \Ac{i} \xbar{i} + \Tc{i} \dbar{i} + \ac{i}$, where $d_i$ is within the controllable subspace given by $\mathcal{A}_i$, $\mathcal{S}_i$ and $\mathcal{D}_i\timestack{a}_i$. This can be assured by requiring
\begin{equation}
\Ac{i} = \mathcal{A}_i, \quad \Tc{i} = \mathcal{T}_i,\quad \ac{i} = \mathcal{D}_i \timestack{a}_i, \label{eq:d_range}
\end{equation}
where the columns of $\mathcal{T}_i$ form a basis for the range space of $\mathcal{S}_i$. (Note that for a non-degenerate problem, $\Ac{i} = 0$, $\Tc{i} = I$ and $\ac{i}=0$ are valid choices since $\mathcal{S}_i$ has full row rank and $\mathcal{X}_t = \mathbb{R}^{n_x}$.) By using this parametrization, the master problem can only use parameters in the subproblem that will result in a feasible subproblem.
\begin{remark}
Computation of $\mathcal{A}_i$, $\mathcal{D}_i$ and $\mathcal{S}_i$ might give numerical issues if the dynamical system in~\eqref{eq:min_problem} is unstable. So for numerical reasons, only stable systems are considered in this paper.
\end{remark}

The optimal parametric primal and dual solutions to a primal degenerate problem on the form~\eqref{eq:mp_subproblem} are given by~\eqref{eq:mp_primal_sol} and
\begin{equation}
\lambda^*_i(\theta_i) = \Kl{i}\theta_i+\kl{i}+\lamN{i},
\label{eq:mp_dual_sol_underdet}
\end{equation}
where $\lamN{i} \in \ker{\Ae{i}^T}$,~\cite{TondellMPQP}. The null space is given by Theorem~\ref{thm:nullspace}.
\begin{theorem} \label{thm:nullspace}
The null space of $\Ae{i}^T$ is given by
\begin{equation}
\ker{\Ae{i}^T} = \{ z \; | \; z= \Nb{i} w_i, \; \forall w_i \in \ker{\mathcal{S}_i^T} \},
\end{equation}
where
\begin{equation}
\Nb{i} \triangleq \begin{bmatrix}
-\Ac{i} & -\mathcal{D}_i & I
\end{bmatrix}^T, \label{eq:null_Z}
\end{equation}
and $\mathcal{S}_i$ is the controllability matrix.
\end{theorem}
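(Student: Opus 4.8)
The plan is to characterise $\ker{\Ae{i}^T}$ directly by solving the homogeneous system $\Ae{i}^T z = 0$, exploiting the staircase block structure of $\Ae{i}$. First I would partition $z$ conformally with the block rows of $\Ae{i}$, i.e. with the dual variables, writing $z = \begin{bmatrix} z_0^T & z_1^T & \cdots & z_{N_i}^T & z_{N_i+1}^T \end{bmatrix}^T$, where $z_0$ is associated with the initial constraint (multiplier $\lamS{0}{i}$), the blocks $z_1,\ldots,z_{N_i}$ with the dynamics constraints $\lamS{1}{i},\ldots,\lamS{N_i}{i}$, and $z_{N_i+1}$ with the terminal constraint (multiplier $\lamtc{i}$). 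Reading off $\Ae{i}^T z = 0$ one block column at a time then splits into two families: the \emph{state} equations, one per $x_{t,i}$-column, and the \emph{control} equations, one per $u_{t,i}$-column.

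The state equations are the crux. The terminal-constraint column forces $z_{N_i} = -z_{N_i+1}$, each state column $x_{t,i}$ for $t=1,\ldots,N_i-1$ gives the adjoint recursion $z_t = A_{t,i}^T z_{t+1}$, and the $x_{0,i}$-column gives $z_0 = A_{0,i}^T z_1$. Setting $w := z_{N_i+1}$ and solving this recursion backward expresses every block as a telescoping product of transposed dynamics matrices acting on $w$: concretely $z_t = -\left(\prod_{s=t}^{N_i-1} A_{s,i}\right)^T w$ for $t=1,\ldots,N_i$ and $z_0 = -\left(\prod_{s=0}^{N_i-1} A_{s,i}\right)^T w = -\mathcal{A}_i^T w$. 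Matching these products against the definitions of $\mathcal{A}_i$ and $\mathcal{D}_i$ in~\eqref{eq:definiton_A_S_D}, and using the feasibility identification $\Ac{i} = \mathcal{A}_i$ from~\eqref{eq:d_range}, this says precisely that $z = \begin{bmatrix} -\Ac{i}^T w \\ -\mathcal{D}_i^T w \\ w \end{bmatrix} = \Nb{i} w$ with $\Nb{i}$ as in~\eqref{eq:null_Z}.

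It then remains to substitute these expressions into the control equations. The $u_{t,i}$-column of $\Ae{i}^T z = 0$ reads $B_{t,i}^T z_{t+1} = 0$; inserting the recursion solution turns this into $\left(\prod_{s=t+1}^{N_i-1} A_{s,i}\, B_{t,i}\right)^T w = 0$, with the end case $t=N_i-1$ reducing to $B_{N_i-1,i}^T w = 0$. Stacking these block equations over $t=0,\ldots,N_i-1$ reproduces exactly the transposed rows of the controllability matrix $\mathcal{S}_i$, so the control equations are collectively equivalent to $\mathcal{S}_i^T w = 0$, i.e. $w \in \ker{\mathcal{S}_i^T}$. Conversely, any $z = \Nb{i} w$ with $w \in \ker{\mathcal{S}_i^T}$ satisfies every block equation by running these computations in reverse, and since $\Nb{i}$ has full column rank (its bottom block is $I$) the map $w \mapsto \Nb{i} w$ is injective; hence $\ker{\Ae{i}^T}$ is exactly the claimed image, and its dimension equals that of the uncontrollable subspace, which is a reassuring sanity check.

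The main obstacle I anticipate is purely the bookkeeping: keeping the block indices, the transposes, and the product convention $\prod_{t=t_0}^{t_1} A_t = A_{t_1}\cdots A_{t_0}$ consistent, so that the telescoping products generated by the backward adjoint recursion line up correctly with the blocks of $\mathcal{A}_i$, $\mathcal{D}_i$ and $\mathcal{S}_i$. Once that recursion is written out carefully, the identification of $z$ with $\Nb{i} w$ and of the control constraints with $w \in \ker{\mathcal{S}_i^T}$ is immediate, so no genuinely hard estimate is involved — the difficulty is entirely notational.
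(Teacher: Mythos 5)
Your proposal is correct and takes essentially the same route as the paper's proof: both write out $\Ae{i}^T z = 0$ block column by block column, solve the resulting adjoint recursion backward from the terminal-constraint multiplier to identify the stacked blocks with $-\Ac{i}^T$ and $-\mathcal{D}_i^T$, and reduce the control-signal columns to $\mathcal{S}_i^T w_i = 0$, yielding $z = \Nb{i} w_i$. Your explicit verification of the converse inclusion and the injectivity of $w_i \mapsto \Nb{i} w_i$ is a small addition that the paper leaves implicit.
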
 
\begin{proof}
For the proof of Theorem~\ref{thm:nullspace}, see Appendix~\ref{app:nullspace}.
\end{proof}
\begin{remark}
Note that $Z_i$ is computed cheaply since the matrices $\Ac{i}$ and $\mathcal{D}_i$ are already computed.
\end{remark}
The dual variables of~\eqref{eq:org_eqc_problem_expanded} are introduced by~\eqref{eq:lam0_dual_vars_ctrl}-\eqref{eq:lamd_dual_vars_ctrl} for each subproblem, and by
\begin{align}
&\lamH{-1} \leftrightarrow \xbar{0} = \bar x \label{eq:lamh_min_one_ctrl} \\
&\lamH{i} \leftrightarrow  \xbar{i+1} =  \Ac{i} \xbar{i} + \Tc{i} \xbar{i} +\ac{i}, \; i \in \intset{0}{p-1} \label{eq:lamh_dual_vars_ctrl}
\end{align}
for the coupling constraints that connect the subproblems in~\eqref{eq:org_eqc_problem_expanded}. Note that $\lamtc{i}$ in~\eqref{eq:lamd_dual_vars_ctrl} is the dual variable corresponding to the terminal constraint in each subproblem, whereas~\eqref{eq:lamh_dual_vars_ctrl} are the dual variables corresponding to the coupling constraints between the subproblems (interpreted as the dynamics constraints in the reduced MPC problem~\eqref{eq:red_mpc}). Hence, $\lamS{tc}{i}$ is computed in the subproblem, and $\lamH{i}$ is computed when~\eqref{eq:red_mpc} is solved. This is depicted in Fig.~\ref{fig:dual_struct} where the upper level corresponds to problem~\eqref{eq:red_mpc} and the lower level to problem~\eqref{eq:org_eqc_problem}. For primal degenerate subproblems, the dual solution is non-unique.
In order to choose a dual solution to the subproblems that coincides with the original non-degenerate problem, the relations between the dual variables of different subproblems are studied. These relations are given by Theorem~\ref{thm:dual_vars_overdet} and Corollary~\ref{cor:dual_vars_underdet}.
\begin{figure}
\centering
\def\svgwidth{1\columnwidth}
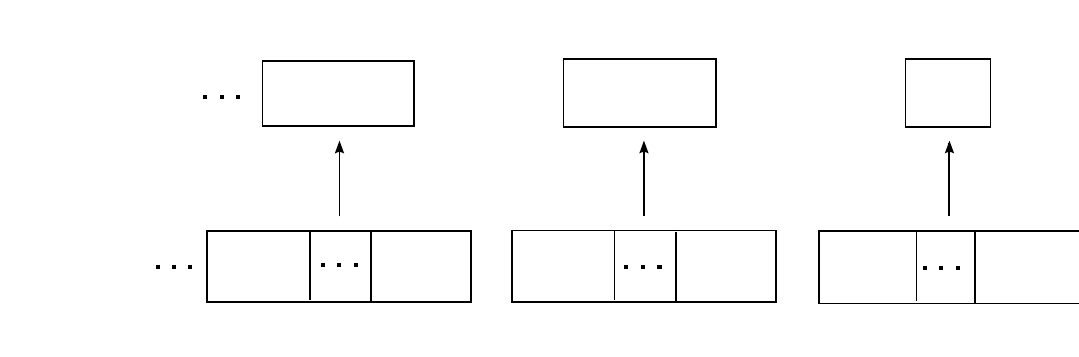
\caption{The dual variables $\lamH{i}$ in the reduced problem are connected to the dual variables in the original problem. Here $\lamtc{i}$ (the dual variables for the terminal constraint) and $\lamS{0}{i}$ (the dual variable for the initial constraint) are computed in subproblem $i$ whereas $\lamH{i}$ (the dual variable for the coupling constraint between subproblem $i$ and $i+1$) is computed when solving~\eqref{eq:red_mpc}.}
\label{fig:dual_struct}
\end{figure}
\begin{theorem} \label{thm:dual_vars_overdet}
Consider an MPC problem on the form~\eqref{eq:org_eqc_problem_expanded} where Assumption~\ref{assum:lin_indep} holds. Let the dual variables be defined by~\eqref{eq:lam0_dual_vars_ctrl},~\eqref{eq:lamt_dual_vars_ctrl},~\eqref{eq:lamd_dual_vars_ctrl},~\eqref{eq:lamh_min_one_ctrl} and~\eqref{eq:lamh_dual_vars_ctrl}. Then the relations between the optimal dual solutions in different subproblems are given by
\begin{align}
&\lamS{0}{p} = \lamH{p-1}  \label{eq:lem:lamSp_eq_lamHp}\\
&\lamS{0}{i} = \lamH{i-1} - \Ac{i}^T \left( \lamtc{i} + \lamH{i} \right), \; i \in \intset{0}{p-1} \label{eq:lem:lamH_eq_lamS} \\
&\Tc{i}^T\left( \lamtc{i}+\lamH{i} \right) = 0 , \; i \in \intset{0}{p-1}\label{eq:Tlam_eq_0} \\
&\lamS{N_i}{i} = - \lamtc{i}, \; i \in \intset{0}{p-1}, \label{eq:lem:lamN_eq_lamd}
\end{align}
where $\Ac{i}$ and $\Tc{i}$ are defined by~\eqref{eq:d_range}. 
\end{theorem}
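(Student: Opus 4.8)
The plan is to obtain all four identities as the first-order stationarity conditions of the expanded problem~\eqref{eq:org_eqc_problem_expanded} with respect to the variables that couple the subproblems. Since~\eqref{eq:org_eqc_problem_expanded} is a convex equality-constrained QP equivalent to~\eqref{eq:org_eqc_problem}, any optimal primal point admits an optimal dual point and the KKT stationarity conditions are necessary for optimality (linear equality constraints always satisfy a constraint qualification), so the claimed relations need only be read off from the appropriate blocks of these conditions.

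First I would form the Lagrangian of~\eqref{eq:org_eqc_problem_expanded}, attaching the multipliers exactly as introduced in~\eqref{eq:lam0_dual_vars_ctrl}--\eqref{eq:lamd_dual_vars_ctrl} and~\eqref{eq:lamh_min_one_ctrl}--\eqref{eq:lamh_dual_vars_ctrl}, each constraint written in the form $(\cdot)=0$ so that the sign convention is fixed once and for all. The key structural observation is that the coupling variables $\xbar{i}$ and $\dbar{i}$, and---for every non-terminal subproblem $i<p$---the final state $x_{N_i,i}$, do \emph{not} appear in the objective of~\eqref{eq:org_eqc_problem_expanded}; they enter only through the initial, terminal, and coupling equality constraints. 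Therefore the stationarity condition with respect to each of these variables contains no gradient-of-cost term and is a pure linear identity among the dual variables, and each of the four claims follows from exactly one such condition.

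Concretely, I would differentiate as follows. Stationarity in $x_{N_i,i}$ for $i<p$ couples only the last dynamics equation (multiplier $\lamS{N_i}{i}$) and the terminal constraint (multiplier $\lamtc{i}$), giving~\eqref{eq:lem:lamN_eq_lamd}. Stationarity in $\dbar{i}$ involves only the two occurrences of $\Tc{i}\dbar{i}$, in the terminal constraint (multiplier $\lamtc{i}$) and in the outgoing coupling constraint (multiplier $\lamH{i}$), which yields~\eqref{eq:Tlam_eq_0}. Stationarity in $\xbar{i}$ must be split by position: for $i=p$ the variable occurs only in the initial constraint of subproblem $p$ (multiplier $\lamS{0}{p}$) and as the endpoint of the incoming coupling constraint (multiplier $\lamH{p-1}$), producing~\eqref{eq:lem:lamSp_eq_lamHp}; for $0\le i\le p-1$ it occurs additionally in the terminal constraint and in the outgoing coupling constraint, both through $\Ac{i}$, and collecting the four contributions gives~\eqref{eq:lem:lamH_eq_lamS}. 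The endpoint case $i=0$ fits the same formula once $\lamH{-1}$ from~\eqref{eq:lamh_min_one_ctrl} is read as the incoming coupling multiplier for $\xbar{0}=\bar x$.

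The difficulty here is not conceptual but a matter of disciplined bookkeeping: I must track the precise sign and coefficient ($+I$, $-I$, $-\Ac{i}$, $-\Tc{i}$) with which each coupling variable enters every constraint, and must separate the two boundary indices $i=0$ and $i=p$ from the generic interior index. I would finally remark that when a subproblem is primal degenerate its dual solution is non-unique, so these identities should be understood as necessary conditions holding for whichever optimal dual is selected; this is precisely the property exploited afterwards to pick, via Corollary~\ref{cor:dual_vars_underdet}, the dual of each subproblem that coincides with the dual of the original non-degenerate problem~\eqref{eq:org_eqc_problem}.
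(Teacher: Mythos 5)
Your proposal is correct and follows essentially the same route as the paper: the paper's proof simply writes down the KKT system of the expanded problem~\eqref{eq:org_eqc_problem_expanded} in Appendix~\ref{app:dual_proof} and observes that the four relations appear directly as the equations~\eqref{eq:app:lam0p}--\eqref{eq:app:lamNi_eq_min_lamd}, which are precisely the stationarity conditions with respect to $\xbar{i}$, $\dbar{i}$ and $x_{N_i,i}$ that you identify. Your version merely makes explicit the bookkeeping (which variable yields which identity, the sign conventions, and the boundary indices $i=0$ and $i=p$) that the paper leaves implicit.
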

\begin{proof}
For the proof of Theorem~\ref{thm:dual_vars_overdet}, see Appendix~\ref{app:subsec:proof_thm_overdet}.
\end{proof}
\begin{corollary} \label{cor:dual_vars_underdet}
Let the assumptions in Theorem~\ref{thm:dual_vars_overdet} be satisfied, and let LICQ hold for all subproblems $i=0,\ldots,p$. Then the optimal dual variables in the subproblems are unique and the relations between the dual solutions in the subproblems are given by
\begin{align}
&\lamS{0}{i} = \lamH{i-1} , \; i \in \intset{0}{p} \label{eq:dual_equality_1}\\
&\lamtc{i} = -\lamH{i} = - \lamS{0}{i+1}, \; i \in \intset{0}{p-1} \\
&\lamS{N_i}{i} = - \lamtc{i} = \lamS{0}{i+1}, \; i \in \intset{0}{p-1} \label{eq:dual_equality_2}.
\end{align}
\end{corollary}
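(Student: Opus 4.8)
The plan is to obtain the three simplified relations directly from the four relations of Theorem~\ref{thm:dual_vars_overdet} by exploiting what LICQ buys us in each subproblem. The entire corollary hinges on one observation: when LICQ holds for subproblem~$i$, the matrix $\Tc{i}$ is square and invertible, which collapses the projected identity~\eqref{eq:Tlam_eq_0} into the pointwise identity $\lamtc{i} + \lamH{i} = 0$. Once this is established, the remaining relations follow by substitution, and the uniqueness claim falls out of the same full-rank argument.

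First I would establish the invertibility of $\Tc{i}$. By Theorem~\ref{thm:nullspace}, $\ker{\Ae{i}^T} = \{ \Nb{i} w_i : w_i \in \ker{\mathcal{S}_i^T}\}$, and since the matrix $\Nb{i}$ in~\eqref{eq:null_Z} contains an identity block, $\Nb{i}$ has trivial kernel. Hence LICQ for subproblem~$i$, i.e.\ $\Ae{i}$ having full row rank (equivalently $\ker{\Ae{i}^T} = \{0\}$), is equivalent to $\mathcal{S}_i$ having full row rank, so that $\range{\mathcal{S}_i} = \mathbb{R}^{n_x}$. By the parametrization~\eqref{eq:d_range}, the columns of $\Tc{i} = \mathcal{T}_i$ then form a basis of $\mathbb{R}^{n_x}$, so $\Tc{i} \in \mathbb{R}^{n_x \times n_x}$ is invertible. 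The same trivial null space means the degenerate freedom $\lamN{i}$ in~\eqref{eq:mp_dual_sol_underdet} vanishes, so the dual solution reduces to the unique~\eqref{eq:mp_dual_sol}, which settles the uniqueness part of the statement.

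With $\Tc{i}$ invertible, relation~\eqref{eq:Tlam_eq_0} yields $\lamtc{i} + \lamH{i} = 0$ for every $i \in \intset{0}{p-1}$. Substituting this into the correction term of~\eqref{eq:lem:lamH_eq_lamS} makes $\Ac{i}^T(\lamtc{i} + \lamH{i}) = 0$ irrespective of $\Ac{i}$, so $\lamS{0}{i} = \lamH{i-1}$ for $i \in \intset{0}{p-1}$; combining with~\eqref{eq:lem:lamSp_eq_lamHp} extends this to $i \in \intset{0}{p}$, which is~\eqref{eq:dual_equality_1}. The same identity $\lamtc{i} = -\lamH{i}$, together with~\eqref{eq:dual_equality_1} evaluated at index $i+1$, gives $\lamtc{i} = -\lamH{i} = -\lamS{0}{i+1}$, and finally substituting into~\eqref{eq:lem:lamN_eq_lamd} produces $\lamS{N_i}{i} = -\lamtc{i} = \lamS{0}{i+1}$.

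The only genuinely substantive step is the equivalence between LICQ in the subproblem and full row rank of the controllability matrix $\mathcal{S}_i$, hence the invertibility of $\Tc{i}$; I expect this to be the main obstacle, since it is what links the abstract LICQ hypothesis to the concrete structure that annihilates the correction terms carrying $\Ac{i}$ and $\Tc{i}$. Everything after that is routine substitution into the relations already proved in Theorem~\ref{thm:dual_vars_overdet}.
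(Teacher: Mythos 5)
Your proof is correct and takes essentially the same route as the paper's: LICQ forces $\mathcal{S}_i$ to have full row rank, which makes $\Tc{i} = \mathcal{T}_i$ nonsingular, so~\eqref{eq:Tlam_eq_0} collapses to $\lamtc{i} = -\lamH{i}$, and the three stated relations then follow by substituting this into~\eqref{eq:lem:lamH_eq_lamS},~\eqref{eq:lem:lamN_eq_lamd} and~\eqref{eq:lem:lamSp_eq_lamHp}. Your explicit derivation, via the identity block of $\Nb{i}$ in Theorem~\ref{thm:nullspace}, of the equivalence between LICQ for the subproblem and full row rank of $\mathcal{S}_i$ merely spells out a step the paper leaves implicit.
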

\begin{proof}
Let LICQ hold for all subproblems $i \in \intset{0}{p-1}$ in~\eqref{eq:org_eqc_problem_expanded}. Then $\ker{\Ae{i}^T} = \emptyset, \; i \in \intset{0}{p-1}$ and the dual solution is unique. Furthermore, 
\begin{align}
&\textrm{rank}(\mathcal{S}_i) = n_x \Rightarrow \mathcal{T}_i=\Tc{i} \textrm{ non-singular} \Rightarrow \\ & \{\textrm{Using~\eqref{eq:Tlam_eq_0} in Theorem~\ref{thm:dual_vars_overdet}} \} \Rightarrow \lamtc{i} = - \lamH{i}. \label{eq:thm:pf:lamd_lamH}
\end{align}
Inserting~\eqref{eq:thm:pf:lamd_lamH} into~\eqref{eq:lem:lamH_eq_lamS} and~\eqref{eq:lem:lamN_eq_lamd} gives $\lamS{N_i}{i} = \lamS{0}{i+1}, \;i \in \intset{0}{p-1}$, and  $\lamS{0}{i} = \lamH{i-1}, \; i \in \intset{0}{p}$ by also using~\eqref{eq:lem:lamSp_eq_lamHp}.
\end{proof}

%

Theorem~\ref{thm:nullspace} is used to choose the null space element $\lamN{i}$ in~\eqref{eq:mp_dual_sol_underdet} to obtain the correct dual solution for subproblem~$i$. According to the theorem $\lamN{i}$ can be computed as
$\lamN{i} = \Nb{i}w_i, \; w_i \in \ker{\mathcal{S}_i^T}$,
giving the optimal dual variables for subproblem $i$ as
\begin{equation}
\lambda_i^*( \theta_i,w_i) = \Klb{i}  \theta_i + \klb{i} + \Nb{i}w_i, \; w_i \in \ker{\mathcal{S}_i^T}.
\label{eq:mp_sub_dual_sol_ns_underdet}
\end{equation}
Let $ \gamma_i = \Klb{i} \theta_i + \klb{i}$ be the dual solution when the minimum norm null space element is selected, and let $\lamH{i}$ be given by the solution to problem~\eqref{eq:red_mpc}. Then it follows from Theorem~\ref{thm:dual_vars_overdet} that
\begin{align}
&\lamB{0}{i} =  \lamH{i-1} - \Ac{i}^T(\lamB{tc}{i} + \lamH{i}), \; i \in \intset{0}{p-1} \label{eq:lam0_min_lamh_plus_A_mult_lamd_plus_lamh_unshifted}\\
&\Tc{i}^T(\lamB{tc}{i} + \lamH{i}) = 0, \; i \in \intset{0}{p-1} \label{eq:T_mult_lamd_plus_lamh_unshifted} \\
&\lamB{N_i}{i}= - \lamB{tc}{i}, \; i \in \intset{0}{p-1} \label{eq:lamNi_eq_min_lamd_unshifted}.
\end{align}
To obtain the same optimal dual solution $\lambda_i$ as for the non-degenerate original problem, the freedom in the choice of the dual variables from~\eqref{eq:mp_sub_dual_sol_ns_underdet} is exploited, \ie,
\begin{equation}
\lambda_i = \gamma_i + \Nb{i}w_i.
\end{equation}
In order to obtain the relation $\lamS{N_i}{i} = \lamH{i} = \lamS{0}{i+1} $ as in the non-generate case,~\eqref{eq:lem:lamH_eq_lamS}-\eqref{eq:lem:lamN_eq_lamd} give that $\lamtc{i} = - \lamH{i}$ must hold. The last block in~\eqref{eq:null_Z} and~\eqref{eq:mp_sub_dual_sol_ns_underdet} gives $\lamtc{i} = \lamB{tc}{i} + w_i$, and based on this $w_i$ is chosen as 
\begin{equation}
w_i = -(\lamB{tc}{i}+\lamH{i}) \in \ker{\mathcal{S}_i^T} \Rightarrow \lamtc{i} = - \lamH{i} \label{eq:lamN_value}.
\end{equation} 
Note that~\eqref{eq:T_mult_lamd_plus_lamh_unshifted} gives that $w_i \in \ker{\mathcal{S}_i^T}$. By using this choice of $w_i$ in the optimal dual solution~\eqref{eq:mp_sub_dual_sol_ns_underdet} together with~\eqref{eq:null_Z},~\eqref{eq:lam0_min_lamh_plus_A_mult_lamd_plus_lamh_unshifted} and~\eqref{eq:lamNi_eq_min_lamd_unshifted} the following hold 
\begin{align}
&\lamS{0}{i} = \lamB{0}{i} + \lamN{0,i} = \lamB{0}{i}- \Ac{i}^T w_i = \lamH{i-1}, \; i \in \intset{0}{p-1} \label{eq:app:lam0_min_lamh_plus_A_mult_lamd_plus_lamh_shifted}\\
&\lamS{N_i}{i} = \lamB{N_i}{i} + \lamN{N_i,i} = - \lamB{tc}{i} -w_i = \lamH{i} \label{eq:app:lamNi_eq_min_lamd_shifted}
\end{align}
Hence, the chosen optimal dual solution of subproblem $i$ coincides with the one for the non-degenerate case if it is computed as
\begin{equation}
\lambda_i^*(\thb{i},\lamH{i}) = \Klb{i} \thb{i}+\klb{i} - \Nb{i}(\lamB{tc}{i}+\lamH{i}) = \gamma_i - \Nb{i}(\lamB{tc}{i}+\lamH{i}).  \label{eq:mp_dual_sol_corrected}
\end{equation}
The dual solution to the original problem can be retrieved from~\eqref{eq:mp_dual_sol_corrected} for $i=0,\ldots,p-1$ and~\eqref{eq:mp_last_dual_sol} for $i=p$.


\section{Problem reduction in parallel}
\label{sec:reduced_mpc}


Theorem~\ref{thm:reduce_mpc} states that the original problem $\MPC{N}$ can be solved by first reducing it to $\MPC{p}$ with $p<N$, and then solve the smaller $\MPC{p}$ to determine the optimal parameters of the subproblems. However, $\MPC{p}$ can instead be reduced again to obtain an even smaller MPC problem, and in this section Theorem~\ref{thm:reduce_mpc} will be used repeatedly to obtain a problem structure that can be solved in parallel. This can be summarized in a tree structure, see Fig.~\ref{fig:arb_tree_struct}. Let the MPC problem at level $k$ be denoted $\MPC{p_{k-1}}$, and let $\xh{i}^{k-1}$ and $\uh{i}^{k-1}$ be the corresponding decision variables. Furthermore, let $\MPCsub{i}{k}$ be the $i$:th subproblem with prediction horizon $N_i^k$ at level $k$. The problem $\MPC{p_{k-1}}$ is reduced to the equivalent $\MPC{p_{k}}$ by solving all subproblems $\MPCsub{i}{k}, \; i \in \intset{0}{p_k}$ parametrically according to Section~\ref{sec:time_split}. Since all subproblems $\MPCsub{i}{k}$ are independent, this can be done in parallel.
The reduction of the MPC problem is continued until a problem with the minimal desired prediction horizon $p_{m-1} = N_0^{m}$ is obtained.
\begin{figure}
\centering
\def\svgwidth{\columnwidth}
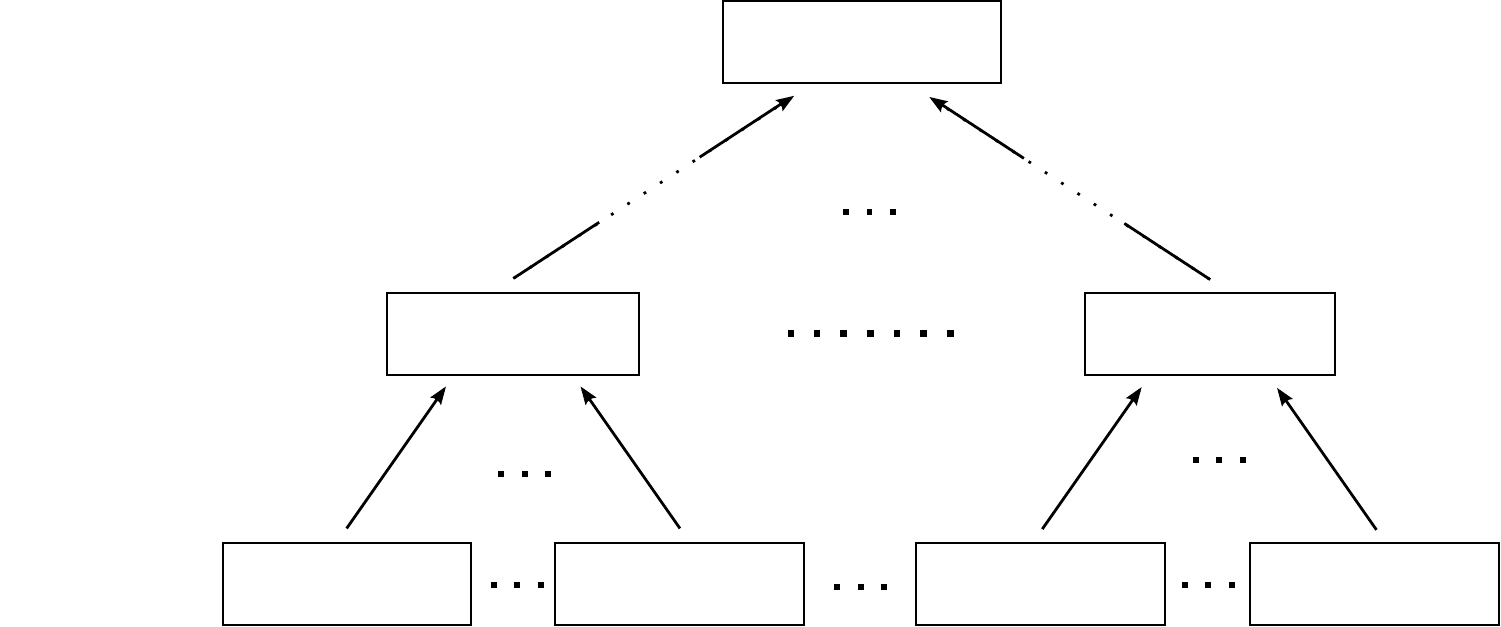
\caption{The tree structure that arises when the MPC problems are reduced in several steps. Each level in the tree forms an MPC problem that is again split into several smaller problems. The rectangles represents the subproblems, and the dotted lines represents that the structure is repeated. Here $0 < i < j < p$ are indices of subproblems, $m+1$ is the number of levels in the tree and $p_{m-1} = N_0^m$ is the minimal prediction horizon.}
\label{fig:arb_tree_struct}
\end{figure}

The original problem $\MPC{N}$ is solved by first reducing the problem in $m$ steps until $\MPC{p_{m-1}}$ is obtained, and then propagating the solution of $\MPC{p_{{m-1}}}$ down to level $k=0$. For subproblems $\MPCsub{i}{k}$ that are non-degenerate, the optimal primal and dual solutions are uniquely determined by the parameters $\xh{i}^{k}$ and $\uh{i}^{k}$ computed by their parents. For the primal degenerate subproblems, the dual solution has to be chosen according to~\eqref{eq:mp_dual_sol_corrected} and is also dependent on $\lamH{i}$. Since information is exchanged between parents and children only, the optimal solution to each $\MPCsub{i}{k}$ can be computed individually from the other subproblems at level~$k$. Hence, this can be performed in parallel.
\begin{remark}
Note that at each level $k$ in the tree in Fig.~\ref{fig:arb_tree_struct}, the common variables for level $k-1$ are computed. Hence, the consensus step to decide the common variables are done in one iteration and it is not necessary to iterate to get consensus between the subproblems as in many other methods.
\end{remark}


\section{Parallel computation of Newton step}
The theory presented in this paper is summarized in Algorithms~\ref{alg:build_tree} and~\ref{alg:propagate_solution}. The algorithms can be used to compute the Newton step which is defined by the solution to~\eqref{eq:org_eqc_problem}. This is where most computational effort is needed when solving~\eqref{eq:min_problem}. The computations can be performed using several processors, and the level of parallelism can be tuned to fit the hardware, \ie the number of processing units, memory capacity, bus speed and more. The level of parallelism is decided by adjusting the number of subproblems at each level in the tree in Fig.~\ref{fig:arb_tree_struct}.

\subsection{Algorithms for parallel Newton step computation}
According to Section~\ref{sec:reduced_mpc} the algorithm for solving $\MPC{N}$ in parallel is based on two major steps; solve the subproblems $\MPCsub{i}{k}$ parametrically and propagate the solution downwards level for level. In both steps standard parallel numerical linear algebra could be used to parallelize further, \eg matrix multiplications, backward and forward substitutions and factorizations. This paper focuses on parallelization using the inherent structure of the MPC problem, and the discussion about possibilities to parallelize the computations will be limited to this scope.

The first step, to construct the tree in Fig.~\ref{fig:arb_tree_struct}, is summarized in Algorithm~\ref{alg:build_tree}. Since all subproblems are independent of each other, the parfor-loop on Line~\ref{alg:line:build_tree_loop} to~\ref{alg:line:build_tree_loop_end} in Algorithm~\ref{alg:build_tree} can be performed in parallel on different processors. Let $p_{\max}$ be the maximum number of subproblems at any level in the tree. Then, if there are $p_{\max}$ processors available, all subproblems $\MPCsub{i}{k}$ at level $k$ can be solved simultaneously. At Line~\ref{alg:line:mp_solve} any suitable method could be used to find the matrices in the affine expressions of the optimal solutions to the subproblems.
\begin{algorithm}[h!]
  \caption{Parallel reduction of MPC problem} \label{alg:build_tree}
  \begin{algorithmic}[1]
  	\STATE Initiate level counter $k:=0$
  	\STATE Initiate the first number of subsystems $p_{-1} = N$
  	\STATE Set the minimal number of subproblems $p_{\min}$
    \WHILE{$p_k > p_{\min}$}
    	\STATE Compute desired $p_k$ to define the number of subproblems (with $p_k < p_{k-1}$)
    	\STATE Split the prediction horizon $0,\ldots, p_{k-1}$ in $p_k+1$ segments
        $0,\ldots, N_0^k$ up to $0,\ldots, N_{p_k}^k$
        \STATE Create subproblems $i=0,\ldots,p_k$ for each time block
   	    \STATE \textbf{parfor} $i = 0,\ldots,p_k$ \textbf{do} \label{alg:line:build_tree_loop}
        	\STATE \hspace{2ex} Solve subproblem $i$ parametrically and store $\Kx{i}$, \\ \hspace{2ex} $\kx{i}$, $\Kl{i}$ and $\kl{i}$ \label{alg:line:mp_solve}
      		\STATE \hspace{2ex} Compute $\Ah{i}$, $\Bh{i}$, $\ah{i}$, $\Hbh{i}$, $\fbh{i}$ and $\cbh{i}$ \\ \hspace{2ex} for the next level
      		\STATE \hspace{2ex} Compute and store $Z_i$
     	\STATE \textbf{end parfor} \label{alg:line:build_tree_loop_end}
     	\STATE Update level counter $k:= k + 1$
    \ENDWHILE
    \STATE Compute maximum level number $k:= k-1$ 
  \end{algorithmic}
\end{algorithm}

The second step is to propagate the solution down in the tree until the bottom level is reached. This is summarized in Algorithm~\ref{alg:propagate_solution}. Since all subproblems in the tree only use information from their parents, the parfor-loop at Line~\ref{alg:line:propagate_solution_loop} to Line~\ref{alg:line:propagate_solution_loop_end} can be computed in parallel. As for the first step, if there is one processor for each subproblem, all problems at each level in the tree can be solved simultaneously. 
\begin{algorithm}
  \caption{Parallel propagation of solution} \label{alg:propagate_solution}
  \begin{algorithmic}[1]
  	\STATE Initialize the first parameter as $\bar x$
  	\STATE Get level counter $k$ from Algorithm~\ref{alg:build_tree}
    \WHILE{$k \geq 0$}
	    \STATE \textbf{parfor} {$i=0,\ldots,p_k$} \textbf{do}\label{alg:line:propagate_solution_loop}
    		\STATE \hspace{2ex} Compute primal solution given by~\eqref{eq:mp_primal_sol} or~\eqref{eq:mp_last_primal_sol}
    		\STATE \hspace{2ex}  Compute dual solution given by~\eqref{eq:mp_dual_sol} or~\eqref{eq:mp_last_dual_sol}
    		\STATE \hspace{2ex}  \textbf{if} {Primal degenerate subproblem}
    			 \STATE \hspace{4ex} Select the dual solution according to~\eqref{eq:mp_dual_sol_corrected}
    		\STATE \hspace{2ex} \textbf{end if}
    	\STATE \textbf{end parfor} \label{alg:line:propagate_solution_loop_end}
    	\IF{k==0}
    		\STATE Compute $\nu_i$ according to Algorithm~\ref{alg:ineq_dual}
    	\ENDIF
   		\STATE Update level counter $k:=k-1$
    \ENDWHILE
  \end{algorithmic}
\end{algorithm}

The equality constrained problem~\eqref{eq:org_eqc_problem} was formed by eliminating the inequality constraints in~\eqref{eq:min_problem} that hold with equality. The dual variables $\nu$ corresponding to these eliminated constraints are important in \eg AS methods and can be computed as 
\begin{equation}
\nu_{i,t} = H_{xv,t,i}^T x_{t,i} + H_{uv,t,i}^Tu_{t,i} + B_{v,t,i}^T \lamS{t}{i} + f_{v,t,i} + H_{v,t,i}^T v_{t,i}, \label{eq:ineq_dual_comp}
\end{equation}
for $t \in \intset{0}{N_i-1}$ for each subproblem $i=0,\ldots,p$.  Here $v_{t,i}$ are the values of the eliminated control signals in~\eqref{eq:min_problem}. For the derivation of this expression, see \eg~\cite{axehill08:thesis}. The computation of $\nu$ is described in Algorithm~\ref{alg:ineq_dual}, which can be performed in parallel if $p_{\max}$ processors are available. Note that each $\nu_{t,i}$ in each subproblem can be computed in parallel if even more processors are available.
\begin{algorithm}
	\caption{Compute eliminated dual variables} \label{alg:ineq_dual}
	\begin{algorithmic}[1]
		\STATE \textbf{parfor} $i=0,\ldots,p_0$ \textbf{do}
		\STATE \hspace{2ex} \textbf{parfor} {$t=0,\ldots,N_i-1$} \textbf{do}
		\STATE \hspace{4ex} Compute $\nu_i$ according to~\eqref{eq:ineq_dual_comp}.
		\STATE \hspace{2ex} \textbf{end parfor}
		\STATE \textbf{end parfor}
	\end{algorithmic}
\end{algorithm}

So far no assumptions on the length of the prediction horizon of each subproblem has been made. If however the lengths of each subsystem is fixed to $s$, and the prediction horizon of the original problem is chosen as $N=s^{m+1}$ for simplicity, then the tree will get ${m+1}$ levels. Furthermore, assume that $s^{m}$ processors are available. Then, using the method proposed in~\cite{TondellMPQP} at Line~\ref{alg:line:mp_solve} in Algorithm~\ref{alg:build_tree}, each level in the tree is solved in roughly $\Ordo{n_{x}^3+\bar n_{u}^3}$ complexity (where $\bar n_u$ is the maximum control signal dimension at any level). Hence, the complete solution is obtained in roughly $\Ordo{m(n_{x}^3+\bar n_{u}^3)}$ complexity. Since ${m = \textrm{log}_s(N)-1}$ the computational complexity grows logarithmically in the prediction horizon, \ie as $\Ordo{\log N}$. 

The optimal length $s$ of the subproblems could be adjusted to fit the hardware which the algorithms are implemented on. Depending on the number of processors, the available memory and the communication delays between processors, the size of $s$ might be adjusted. The choice $s=2$ corresponds to a binary tree structure in Fig.~\ref{fig:arb_tree_struct}, and if the communication delays are negligible and there are sufficiently many processors available, it can be expected that this will give the best possible performance.

\subsection{Numerical results}
\label{sec:num_res}
The proposed algorithm for computing the Newton step using Algorithm~\ref{alg:build_tree} and~\ref{alg:propagate_solution} has been implemented in \textsc{Matlab} and used to solve random stable MPC problems in the form~\eqref{eq:org_eqc_problem}. The algorithm has been implemented serially, and the parallel computation times are simulated by summing over the maximum solution time at each level in the tree. Hence, memory and communication delays have not been addressed but are assumed small in comparison to the cost of the computations. In the implemented algorithm the subproblems are solved and $\Kx{i}$, $\kx{i}$, $\Kl{i}$ and $\kl{i}$ are computed using the methods proposed in~\cite{TondellMPQP}. Note that any choice of method that computes these matrices could be used. The numerical results for the algorithm when solving Newton steps for problems with $n_x=15$, $n_u=10$ and $s=2$ are seen in Fig.~\ref{fig:complexity}. The computation times are averaged over several runs. Here, the proposed algorithm has been compared to a well known state-of-the-art serial algorithm based on the Riccati factorization from \eg~\cite{axehill08:thesis} which is known to have $\Ordo{N}$ complexity growth. From the figure, the linear complexity of the Riccati based algorithm is evident. It is not obvious from this plot that the complexity grows logarithmically for this implementation of the proposed parallel algorithm. However, it can be observed that the computational time required by the parallel algorithm is significantly less and the growth of the computational complexity is much lower.
\begin{figure}
\centering
\includegraphics[width=0.85\columnwidth]{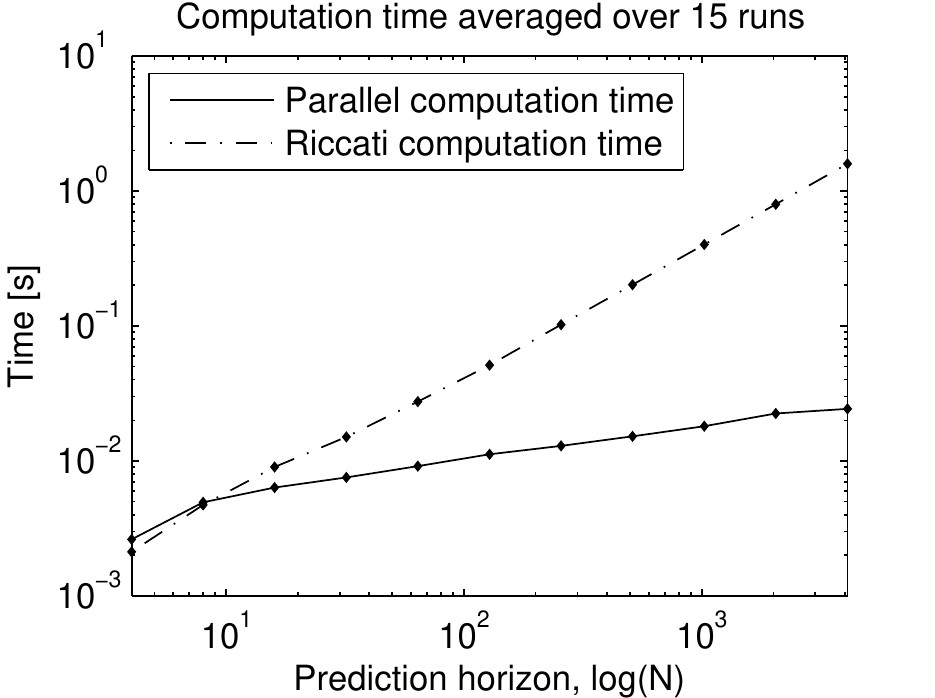}
\caption{Averaged solution times for the parallel solution of $\MPC{N}$ implemented in \textsc{Matlab}. It is compared to a serial algorithm based on Riccati factorization with $\Ordo{N}$ complexity. Here the systems are of the size $n_x=15$ and $n_u=10$, and $s=2$.}
\label{fig:complexity}
\end{figure}

The simulations were performed on an 
Intel Core i7-3517U CPU @ 1.9GHz running Windows 7 (version 6.1, build 7601: Service Pack 1) and \textsc{Matlab} (8.0.0.783, R2012b).


\section{Conclusions}
\label{sec:conclusions}
In this paper a new algorithmic framework for computing Newton steps for MPC problems in parallel has been presented. It has been shown that the corresponding equality constrained MPC problem can be reduced to a new problem on the same form but with shorter prediction horizon in parallel. By repeating this in several steps, a tree structure of small MPC problems with short prediction horizons is obtained and can efficiently be solved in parallel. The proposed algorithm solves the Newton step arising in MPC problems in $\Ordo{\log (N)}$, \ie computational effort grows logarithmically in the prediction horizon $N$. In numerical experiments it has been shown that the proposed parallel algorithm outperforms an existing well known state-of-the-art serial algorithm. For future work, MPC problems with general linear constraints will be addressed and if the stability assumption can be removed if for example a pre-stabilization technique is employed.


\newpage
\appendix
\section{Proofs}
\label{app:dual_proof}
The original equality constrained MPC problem is given by~\eqref{eq:org_eqc_problem}, where
\begin{equation}
f_t = \begin{bmatrix}
f_{x,t} \\ f_{u,t}
\end{bmatrix},
\end{equation}
and $\lambda_{t+1}$ is the dual variable corresponding to the equality constraint $x_{t+1} = A_tx_t+B_tu_t + a_t$.
Then the KKT system gives the following equations for $t\in \intset{0}{N-1}$
\begin{align}
&H_{t,x}x_t + H_{t,xu}u_t + f_{t,x} - \lambda_t + A_t^T \lambda_{t+1} = 0 \label{eq:app:kkt_x}\\
&H_{t,xu}^T x_t + H_{t,u }u_t + f_{t,u} + B_t^T \lambda_{t+1} = 0 \label{eq:app:kkt_u}\\
&x_{t+1} = A_t x_t + B_t u_t + a_t \label{eq:app:dyn}
\end{align}
and 
\begin{align}
&H_N x_N + f_N - \lambda_N = 0 \label{eq:app:kkt_last}\\
&x_0 = \bar x.
\end{align}

The extended problem that is composed of $p+1$ subproblems that share the common variables is given by~\eqref{eq:org_eqc_problem_expanded}. The common variables $\xbar{i}$ and $\dbar{i}$ are introduced as optimization variables in the extended problem. Let the dual variables for the subproblems $i=0,\ldots,p$ be defined by~\eqref{eq:lam0_dual_vars_ctrl}-\eqref{eq:lamd_dual_vars_ctrl},~\eqref{eq:lamh_min_one_ctrl} and~\eqref{eq:lamh_dual_vars_ctrl}. 
Then the corresponding KKT system of this extended problem consists of the following equations (for all subproblems $i=0,\ldots,p$)
\begin{align}
&H_{x,t,i}x_{t,i} + H_{xu,t,i}u_{t,i} + f_{x,t,i} - \lamS{t}{i} + A_{t,i}^T \lamS{t+1}{i} = 0 \label{eq:app:kkt_ext_x}\\
&H_{xu,t,i}^T x_{t,i} + H_{u,t,i} u_{t,i} + f_{u,t,i} + B_{t,i}^T \lamS{t+1}{i} = 0 \label{eq:app:kkt_ext_u}
\end{align}
for $t \in \intset{0}{N_i-1}$. For the last subproblem there is also an equation corresponding to the last term in the objective function
\begin{equation}
H_{N_p,p}x_{N_p,p} + f_{N_p,p} - \lamS{N_p}{p} = 0. \label{eq:app:kkt_ext_last}
\end{equation}
Furthermore, the relation between the dual variables $\lamS{N_i}{i}$, $\lamS{0}{i}$, $\lamtc{i}$ and $\lamH{i}$ for $i=0,\ldots,p-1$ are given directly by the KKT system
\begin{align}
&\lamS{0}{p} = \lamH{p-1} \label{eq:app:lam0p}\\
& \lamS{0}{i} = \lamH{i-1} -  \Ac{i}^T(\lamtc{i} + \lamH{i}), \; t \in \intset{0}{p-1} \label{eq:app:lam0_min_lamh_plus_A_mult_lamd_plus_lamh}\\
&\Tc{i}^T(\lamtc{i} + \lamH{i}) = 0, \; t \in \intset{0}{p-1} \label{eq:app:T_mult_lamd_plus_lamh}\\
&\lamS{N_i}{i} = - \lamtc{i}, \; t \in \intset{0}{p-1} \label{eq:app:lamNi_eq_min_lamd}.
\end{align}
The primal feasibility constraints that must be satisfied in the KKT system are given by
\begin{align}
\forall \; i \in &\intset{0}{p} \begin{cases}
x_{0,i} = \xbar{i} \\
x_{t+1,i} = A_{t,i}x_{t,i} + B_{t,i}u_{t,i}+  a_{t,i}, \; t \in \intset{0}{N_{i}-1} \\
x_{N_{i},i} = d_{i}= \Ac{i} \xbar{i} + \Tc{i} \dbar{i} +  \ac{i}, \; i \neq p \end{cases}\\
\xbar{0} &= \bar x \\
\xbar{i+1} &= d_i = \Ac{i} \xbar{i} + \Tc{i} \dbar{i} + \ac{i}, \; i \in \intset{0}{p-1} \label{eq:app:coupling_cnstr}
\end{align}


\subsection{Proof of Theorem~\ref{thm:reduce_mpc}}
\label{app:subsec:proof_lemma_reduction}
The reduction of $\MPC{N}$ to $\MPC{p}$ with $p<N$ follows directly from the theory presented in Section~\ref{sec:time_split}.

The optimal primal variables in subproblem $i$ and $i+1$ are related as $x_{0,i+1}^* = x_{N_i,i}^*$, whereas the dual variables given by \eqref{eq:mp_dual_sol_corrected} are related according to~\eqref{eq:dual_equality_1}-\eqref{eq:dual_equality_2}. By inserting~\eqref{eq:dual_equality_1}-\eqref{eq:dual_equality_2} into~\eqref{eq:app:kkt_ext_x} and~\eqref{eq:app:kkt_ext_u} and using $x_{0,i+1}^* = x_{N_i,i}^*$, the resulting equations are identical to~\eqref{eq:app:kkt_x} and~\eqref{eq:app:kkt_u}. Hence, the solution to the system of equations defined by~\eqref{eq:dual_equality_1}-\eqref{eq:dual_equality_2} and~\eqref{eq:app:kkt_ext_x}-\eqref{eq:app:kkt_ext_last} is a solution to the original KKT system of the problem in~\eqref{eq:org_eqc_problem}. Assumption~\ref{assum:lin_indep} gives uniqueness of the solution and the unique optimal solution to~\eqref{eq:org_eqc_problem} can hence be obtained as
\begin{equation}
X^* = \begin{bmatrix}
x_0^* \\ \vdots \\ x_{N_0}^* \\ u_{N_0}^* \\ \vdots \\ x_N^*
\end{bmatrix} = \begin{bmatrix}
x_{0,0}^* \\ \vdots \\ x_{N_0,0}^* \\ u_{0,1}^* \\ \vdots \\ x_{N_p,p}^*
\end{bmatrix}, \; \lambda^* = 
\begin{bmatrix}
\lambda_0^* \\ \vdots \\ \lambda_{N_i}^* \\ \lambda_{N_i+1}^* \\ \vdots \\ \lambda_N^*
\end{bmatrix} = \begin{bmatrix}
\lamS{0}{0}^* \\ \vdots \\ \lamS{N_i}{0}^* \\ \lamS{1}{1}^* \\ \vdots \\ \lamS{N_p}{p}^*
\end{bmatrix}.
\end{equation}
\QED


\subsection{Proof of Theorem~\ref{thm:nullspace}}
\label{app:nullspace}
The null space of $\Ae{i}^T$ is given by all $\lamN{i}$ such that $\Ae{i}^T \lamN{i} = 0$, which can be expressed as
\begin{align}
&-\lamN{t,i} + A_{t,i}^T \lamN{t+1,i} = 0, \; t \in \intset{0}{N_i-1} \label{pf:thm:nullspace:lamATlam}\\
&B_{t,i}^T \lamN{t+1,i} = 0, \; t \in \intset{0}{N_i-1} \label{pf:thm:nullspace:BTlam}\\
&\lamN{N_i,i} = - \lamN{tc,i} . \label{pf:thm:nullspace:lamNi}
\end{align}
Equation~\eqref{pf:thm:nullspace:lamATlam} and~\eqref{pf:thm:nullspace:lamNi} can be combined into
\begin{align}
\lamN{i} = \begin{bmatrix}
\lamN{0,i} \\ \vdots \\ \lamN{tc,i}
\end{bmatrix}=\begin{bmatrix}  
- \Ac{i}^T \\-\mathcal{D}_i^T \\ I
\end{bmatrix} \lamN{tc,i},
\end{align}
where $\Ac{i}$ and $\mathcal{D}_i$ are defined as in~\eqref{eq:d_range} and~\eqref{eq:definiton_A_S_D}. By using~\eqref{pf:thm:nullspace:BTlam}, $\lamN{tc,i}$ has to satisfy $\lamN{tc,i} \in \ker{\mathcal{S}_i^T}$. For notational convenience, let $w_i = \lamN{tc,i}$ and define $\Nb{i}$ as
\begin{equation}
\Nb{i} \triangleq \begin{bmatrix}
-\Ac{i} & -\mathcal{D}_i & I  
\end{bmatrix}^T.
\end{equation} 
Then the null space element $\lamN{i}$ is computed as
\begin{equation}
\lamN{i} = Z_i w_i, \quad w_i \in \ker{\mathcal{S}_i^T}.
\end{equation} 
\QED

\subsection{Proof of Theorem~\ref{thm:dual_vars_overdet}}
\label{app:subsec:proof_thm_overdet}
The equations~\eqref{eq:app:kkt_ext_x}-\eqref{eq:app:coupling_cnstr} are given by KKT system of the extended MPC problem~\eqref{eq:org_eqc_problem_expanded} that consists of $p+1$ subproblems . The relations between the optimal dual variables in different subproblems are directly given by~\eqref{eq:app:lam0p}-\eqref{eq:app:lamNi_eq_min_lamd}.
\QED

%

\bibliographystyle{plain}
\bibliography{IEEEfull,axe_full,ianFull}

\end{document}